\newtheorem{theorem}{Theorem}[section]
\newtheorem{lem}[theorem]{Lemma}
\newtheorem{prop}[theorem]{Proposition}
\newtheorem{ex}{Example}[section]
\def \Zl {{\mathbb Z}}
\def \Nl {{\mathbb N}}
\def \Rl {{\mathbb R}}
\def \Ql {{\mathbb Q}}
\def \Cl {{\mathbb C}}
\def \vl {{\mathbf v}}
\title{Quantum State Transfer on a Class of Circulant Graphs}
\author{ Hiranmoy Pal\\
Department of Mathematics\\
Indian Institute of Information Technology Bhagalpur\\Bhagalpur, India - 813210\\
Email: hiranmoy@iiitbh.ac.in
}
\date{\today}
\begin{document}
\maketitle

\vspace{-0.3in}

\begin{center}{Abstract}\end{center}
We study the existence of quantum state transfer on non-integral circulant graphs. We find that continuous time quantum walks on quantum networks based on certain circulant graphs with $2^k$ $\left(k\in\mathbb{Z}\right)$ vertices exhibit pretty good state transfer when there is no external dynamic control over the system. We generalize few previously known results on pretty good state transfer on circulant graphs, and this way we re-discover all integral circulant graphs on $2^k$ vertices exhibiting perfect state transfer.\\
\noindent {\textbf{Keywords}: Graph, Circulant graph, Quantum state transfer in Graphs, Field Extension.}\\
\noindent {\textbf{Mathematics Subject Classifications}: 15A16, 05C50}
\newpage

\section{Introduction}
We consider state transfer with respect to the adjacency matrix of a graph (see \cite{bon}). All graphs presented are assumed to be finite, undirected and simple. Let $G$ be a graph on $n$ vertices with adjacency matrix $A$. The transition matrix associated to $G$ is defined by
\begin{eqnarray}\label{IE1}
H(t):=\exp{\left(-itA\right)},~\text{where}~t\in\Rl.
\end{eqnarray}
A quantum state in a quantum network based on $G$ is defined to be a unit vector in $\Cl^n$. The Equation \ref{IE1} governs the way a quantum state evolve over time. Let ${\bf e}_u$ denote the characteristic vector corresponding to the vertex $u$ of $G$. The graph $G$ is said to exhibit pretty good state transfer (PGST) between a pair of vertices $u$ and $v$ if there is a sequence $\left\lbrace t_k\right\rbrace$ of real numbers such that
\begin{eqnarray}\label{IE2}
\lim_{k\rightarrow\infty} H(t_k) {\bf e}_u=\gamma {\bf e}_v,~\text{where}~\gamma\in\Cl~\text{and}~|\gamma|=1.
\end{eqnarray}
If there is a sub-sequential limit $t_0$ of $\left\lbrace t_k\right\rbrace$ then Equation \ref{IE2} gives $H(t_0) {\bf e}_u=\gamma {\bf e}_v,$ in which case we say that there is perfect state transfer (PST) (see \cite{bose}) between $u$ and $v$ at time $t_0$. If $H(t_0) {\bf e}_u=\gamma {\bf e}_u$ for $t_0\neq 0$ then $G$ is said to be periodic at $u$ at time $t_0$. Moreover $G$ is said to be periodic if it is periodic at all vertices at the same time (in that case $H(t_0)=\gamma I$). Perfect state transfer is most desirable in this context, however, Godsil \cite{god2} has shown that there are very few graphs exhibiting PST. Consequently the study of PGST, introduced by Godsil in \cite{god1}, acquired considerable attention. In \cite{god4}, Godsil \emph{et al.} characterized all paths exhibiting PGST between the end vertices. Later, Coutinho \emph{et al.} \cite{cou3} further investigated and found a class paths having PGST between internal vertices. In \cite{pal4}, we found that a cycle on $n$ vertices exhibits PGST if and only if $n$ is a power of $2$. This was the first examples of non-integral circulant graphs admitting PGST. Apart from cycles, we have also found families of non-integral circulant graphs exhibiting PGST in \cite{pal0, pal4}. Several other ralavent results regarding state transfer can be found in \cite{mil4, fan, kirk, pal, pal1, pal2, pal3, saxe}. We extend some pre-existing results appearing in \cite{pal0, pal4} on circulant graphs admitting PGST and re-discover families of circulant graphs on $2^k$ vertices exhibiting PST.\par
We begin with a brief introduction to Kronecker approximation theorem on simultaneous approximation of numbers. We apply this result to establish some relations on the eigenvalues of a circulant graph in Lemma \ref{L3} and Theorem \ref{T4}.  
\begin{theorem}[Kronecker approximation theorem]\cite{apo}
If $\alpha_1,\ldots,\alpha_l$ are arbitrary real numbers and if $1,\theta_1,\ldots, \theta_l$ are real, algebraic numbers linearly independent over $\Ql$ then for $\epsilon>0$ there exist $q\in\Zl$ and $p_1,\ldots,p_l\in\Zl$ such that
\[\left|q\theta_j-p_j-\alpha_j\right|<\epsilon.\]
\end{theorem}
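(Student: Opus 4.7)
The plan is to reduce the statement to a topological density claim on the $l$-dimensional torus $\mathbb{T}^{l}:=(\Rl/\Zl)^{l}$ and then exploit character theory on compact abelian groups. First I would observe that the desired inequality $|q\theta_{j}-p_{j}-\alpha_{j}|<\epsilon$ for suitable $p_{j}\in\Zl$ is exactly the statement that the point $(q\theta_{1},\ldots,q\theta_{l})$ lies within $\epsilon$ of $(\alpha_{1},\ldots,\alpha_{l})$ modulo $\Zl^{l}$. Hence it suffices to prove that the orbit
\[\Omega:=\bigl\{(q\theta_{1},\ldots,q\theta_{l})\bmod\Zl^{l}:q\in\Zl\bigr\}\subseteq\mathbb{T}^{l}\]
is dense in $\mathbb{T}^{l}$.

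Next I would set $H:=\overline{\Omega}$. Because $\Omega$ is already a subgroup and the group operation on $\mathbb{T}^{l}$ is continuous, $H$ is a closed subgroup of the compact abelian group $\mathbb{T}^{l}$. Suppose, for contradiction, that $H\neq\mathbb{T}^{l}$. By Pontryagin duality (equivalently, a Stone--Weierstrass/Fourier argument on $\mathbb{T}^{l}$), a proper closed subgroup must be annihilated by some non-trivial continuous character, and every continuous character of $\mathbb{T}^{l}$ has the form
\[\chi(x_{1},\ldots,x_{l})=\exp\!\bigl(2\pi i(m_{1}x_{1}+\cdots+m_{l}x_{l})\bigr),\qquad (m_{1},\ldots,m_{l})\in\Zl^{l}\setminus\{\o\}.\]

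Finally I would derive the contradiction from the $\Ql$-linear independence hypothesis. Evaluating such a $\chi$ on the orbit point corresponding to $q=1$ yields $\exp\!\bigl(2\pi i(m_{1}\theta_{1}+\cdots+m_{l}\theta_{l})\bigr)=1$, so there exists $m_{0}\in\Zl$ with $m_{1}\theta_{1}+\cdots+m_{l}\theta_{l}=m_{0}$. This is a non-trivial $\Ql$-linear relation among $1,\theta_{1},\ldots,\theta_{l}$, which is impossible. The main obstacle is the duality step---justifying that a proper closed subgroup of $\mathbb{T}^{l}$ is annihilated by a non-trivial integer character; this can be handled either by invoking Pontryagin duality directly, or more elementarily by convolving a Fej\'er-type kernel against the indicator of a small open set disjoint from $H$ and examining its Fourier expansion. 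The algebraicity hypothesis on the $\theta_{j}$ is not required for this purely topological density argument; it is presumably imposed in the cited source to obtain effective bounds on $q$.
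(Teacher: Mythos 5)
The paper does not actually prove this statement: it is imported verbatim from Apostol \cite{apo} and used as a black box, so there is no internal proof to compare against. Judged on its own, your argument is correct and is the standard ``soft'' proof of Kronecker's theorem. The reduction is exactly right: the conclusion is equivalent to density in $(\Rl/\Zl)^{l}$ of the cyclic subgroup $\Omega$ generated by $(\theta_{1},\ldots,\theta_{l})\bmod\Zl^{l}$, and the closure $H$ of a subgroup is a closed subgroup. The step you flag as the main obstacle---that a proper closed subgroup of the torus is killed by a nontrivial character $\chi(x)=\exp\left(2\pi i\sum_{j}m_{j}x_{j}\right)$ with $(m_{1},\ldots,m_{l})\in\Zl^{l}\setminus\{\o\}$---is standard Pontryagin duality (the nontrivial compact quotient $\mathbb{T}^{l}/H$ has enough characters to separate points, and any such character pulls back to one of $\mathbb{T}^{l}$ vanishing on $H$); your Fej\'er-kernel alternative is an acceptable elementary substitute. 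The endgame is also sound: $\chi$ trivial on the $q=1$ orbit point forces $m_{1}\theta_{1}+\cdots+m_{l}\theta_{l}=m_{0}\in\Zl$, which is a nontrivial $\Ql$-linear relation among $1,\theta_{1},\ldots,\theta_{l}$ precisely because some $m_{j}$ with $j\geq1$ is nonzero. Your closing remark is also accurate: algebraicity of the $\theta_{j}$ plays no role in the qualitative statement (Apostol's own analytic proof does not need it either); in this paper the hypothesis is harmless because the theorem is only ever applied to the algebraic numbers $2\cos(2l\pi/n)$, but a reader should not infer that it is a genuine restriction.
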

A Cayley graph over a finite abelian group $\left(\Gamma,+\right)$ with the connection set $S$, where $0\notin S\subseteq\Gamma$ and $\left\lbrace -s:s\in S\right\rbrace=S$, is denoted by $Cay\left(\Gamma,S\right)$. The elements of $\Gamma$ are the vertices of the graph, where two vertices $a,b\in\Gamma$ are adjacent if and only if $a-b\in S$. If $\Gamma=\Zl_n$ then the Cayley graph is called circulant graph. In particular, a Cayley graph over $\Zl_n$ with $S=\left\lbrace1,n-1\right\rbrace$ is called a cycle which is denoted by $C_n$. The eigenvalues of $C_n$ are given by
\begin{eqnarray}\label{IE3}
\lambda_l=2\cos{\left(\frac{2l\pi}{n}\right)},\;0\leq l\leq n-1,
\end{eqnarray}
and the associated eigenvectors are $\vl_l=\left[1,\omega_n^l,\ldots,\omega_n^{l(n-1)}\right]^T$, where $\omega_n=\exp{\left(\frac{2\pi i}{n}\right)}$ is the primitive $n$-th root of unity.\par
Let the graphs $G$ and $H$ be defined on a common vertex set $V$, and the respective edge sets $E(G)$ and $E(H)$. The edge union of $G$ and $H$, denoted $G\cup H$, is a graph with vertex set $V$ and edge set $E(G)\cup E(H)$. The following result enables us to realise the transition matrix of a Cayley graph as a product of transition matrices of its sub-graphs.
\begin{prop}\cite{pal1}\label{IP1}
Let $\Gamma$ be a finite abelian group and consider two disjoint and symmetric subsets $S,T\subset\Gamma$. Suppose the transition matrices of $Cay(\Gamma, S)$ and $ Cay(\Gamma, T)$ are $H_{S}(t)$ and $H_{T}(t)$, respectively. Then $Cay(\Gamma, S\cup T)$ has the transition matrix $H_{S}(t)H_{T}(t).$
\end{prop}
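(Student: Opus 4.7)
The plan is to reduce the claim to the familiar fact that $\exp(A+B)=\exp(A)\exp(B)$ whenever $A$ and $B$ commute, applied to the adjacency matrices $A_S$ and $A_T$ of the two Cayley graphs. So the proof proceeds in two logical steps: first, observe that the adjacency matrices decompose additively; second, establish that they commute.

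For the first step, since $S$ and $T$ are disjoint and both symmetric, the edge sets $E(Cay(\Gamma,S))$ and $E(Cay(\Gamma,T))$ are disjoint subsets of $\binom{\Gamma}{2}$, so the adjacency matrix of $Cay(\Gamma,S\cup T)$ is literally $A_{S\cup T}=A_S+A_T$ entrywise. From the definition \eqref{IE1}, we then have
\[
H_{S\cup T}(t)=\exp\bigl(-it(A_S+A_T)\bigr).
\]

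For the second step, the key point is that $A_S$ and $A_T$ commute. The cleanest way to see this is through the character theory of the finite abelian group $\Gamma$: for every character $\chi\in\widehat{\Gamma}$, the vector $\vl_\chi=\bigl[\chi(g)\bigr]_{g\in\Gamma}$ is a simultaneous eigenvector of every Cayley-graph adjacency matrix over $\Gamma$, with eigenvalues $\sum_{s\in S}\chi(s)$ and $\sum_{s\in T}\chi(s)$ respectively. Since $\widehat{\Gamma}$ furnishes an orthogonal eigenbasis of $\Cl^{|\Gamma|}$ shared by $A_S$ and $A_T$, these matrices are simultaneously diagonalizable, hence $A_SA_T=A_TA_S$. (Alternatively, one can verify commutativity directly from the fact that $A_S$ and $A_T$ are both linear combinations of the permutation matrices given by translations in $\Gamma$, and translations commute in an abelian group.)

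With commutativity in hand, the standard power-series manipulation
\[
\exp\bigl(-it(A_S+A_T)\bigr)=\exp(-itA_S)\exp(-itA_T)
\]
holds, which by definition of $H_S(t)$ and $H_T(t)$ gives $H_{S\cup T}(t)=H_S(t)H_T(t)$, as required. I do not anticipate a serious obstacle: the only subtle point is the commutativity of $A_S$ and $A_T$, but this is immediate from the abelian structure of $\Gamma$ via characters, so the argument is essentially a two-line verification once the ingredients are named correctly.
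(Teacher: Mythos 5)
Your proof is correct and is the standard argument for this result (which the paper cites from \cite{pal1} without reproducing a proof): decompose $A_{S\cup T}=A_S+A_T$ using disjointness of $S$ and $T$, note that the two adjacency matrices commute because the characters of the finite abelian group $\Gamma$ give a common eigenbasis (this commutativity is exactly the paper's Proposition \ref{IP3}), and then factor the matrix exponential. No gaps; both the disjoint-edge-set step and the commutativity step are justified correctly.
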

A graph is called integral if all its eigenvalues are integers. Let $n,d\in\Nl$ and $d$ be a proper divisor of $n$. We denote $S_n(d)=\left\lbrace x\in\Zl_n: gcd(x,n)=d\right\rbrace.$ If $D$ is a set containing proper divisors of $n$ then define $S_n(D)=\bigcup\limits_{d\in D} S_n(d).$ The set $S_n(D)$ is called a gcd-set of $\Zl_n$. The following theorem due to W. So characterizes all integral circulant graphs.
\begin{theorem}\cite{so}\label{so}
A circulant graph $Cay\left(\Zl_n,S\right)$ is integral if and only if $S$ is a gcd-set.
\end{theorem}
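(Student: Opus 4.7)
The plan is to read the eigenvalues of $Cay(\Zl_n,S)$ as a discrete Fourier transform and analyse their integrality through the Galois action of $(\Zl/n\Zl)^*$ on cyclotomic integers. Since the adjacency matrix of a circulant graph is diagonalised by the Fourier basis (as in Equation \ref{IE3} for cycles, generalised to arbitrary $S$), its eigenvalues are
\[\lambda_j = \sum_{s \in S} \omega_n^{js}, \quad j = 0, 1, \ldots, n-1,\]
and each $\lambda_j$ is an algebraic integer in $\Ql(\omega_n)$. Thus the question reduces to whether $\lambda_j\in\Ql$, equivalently whether $\lambda_j$ is fixed by $\mathrm{Gal}(\Ql(\omega_n)/\Ql)\cong(\Zl/n\Zl)^*$.

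For the forward implication, I would assume $S=\bigcup_{d\in D}S_n(d)$ and split the sum according to $\gcd(s,n)$. Writing $s=dt$ with $\gcd(t,n/d)=1$ identifies the inner sum with a Ramanujan sum:
\[\sum_{s\in S_n(d)}\omega_n^{js}=\sum_{\gcd(t,n/d)=1}\omega_{n/d}^{jt}=c_{n/d}(j).\]
A short Galois-invariance check (the substitution $t\mapsto at$ permutes the units modulo $n/d$) shows $c_{n/d}(j)\in\Ql$, and being an algebraic integer forces $c_{n/d}(j)\in\Zl$. Hence $\lambda_j=\sum_{d\in D}c_{n/d}(j)\in\Zl$ for every $j$.

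For the converse, assume every $\lambda_j$ is an integer. For any unit $a\in(\Zl/n\Zl)^*$ the Galois automorphism $\sigma_a:\omega_n\mapsto\omega_n^a$ sends $\lambda_j$ to $\lambda_{aj}$, and integrality yields $\lambda_{aj}=\lambda_j$. Applying Fourier inversion and changing variables $j\mapsto a^{-1}j$,
\[\mathbf{1}_S(as)=\frac{1}{n}\sum_j\lambda_j\,\omega_n^{-jas}=\frac{1}{n}\sum_{j'}\lambda_{a^{-1}j'}\,\omega_n^{-j's}=\mathbf{1}_S(s),\]
so $S$ is stable under multiplication by $(\Zl/n\Zl)^*$. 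The orbits of this action on $\Zl_n\setminus\{0\}$ are precisely the sets $S_n(d)$ for proper divisors $d$ of $n$, so $S$ is a gcd-set.

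The main obstacle is the converse direction: the key conceptual move is to translate pointwise integrality of the $\lambda_j$ into a global symmetry of the indicator function $\mathbf{1}_S$ by combining Galois invariance with Fourier inversion. The forward direction, by contrast, is essentially bookkeeping once one recognises the Ramanujan sum.
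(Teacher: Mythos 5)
Your argument is correct. Note, however, that the paper does not prove this statement at all: it is quoted as a known result of W.~So (reference \cite{so}), so there is no in-paper proof to compare against. Your proof is essentially the standard one for this theorem: diagonalise by the Fourier basis so that $\lambda_j=\sum_{s\in S}\omega_n^{js}$, observe in the forward direction that the sum over $S_n(d)$ collapses to the Ramanujan sum $c_{n/d}(j)\in\Zl$, and in the converse direction use that rationality of the $\lambda_j$ forces Galois invariance $\lambda_{aj}=\lambda_j$, which via Fourier inversion makes $\mathbf{1}_S$ constant on the orbits of $(\Zl/n\Zl)^*$ — and these orbits are exactly the sets $S_n(d)$. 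All the individual steps (the change of variables $s=dt$, the surjectivity of $(\Zl/n\Zl)^*\to(\Zl/(n/d)\Zl)^*$ implicit in the orbit description, and the ``rational algebraic integer is an integer'' step) check out, so the proposal stands as a complete proof of the cited theorem.
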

Hence, for a non-integral circulant graph $Cay\left(\Zl_n,S\right)$, there must exist at least one such $d$ so that $S\cap S_n(d)$ is a non-empty proper subset of $S_n(d)$.\par
Now we include the proof the following proposition which was mentioned in \cite{fan} without a proof. The result gives a relation between PST and PGST in periodic graphs.
\begin{prop}\label{IP2}
If a graph is periodic, then it admits perfect state transfer if and only if it admits pretty good state transfer.
\end{prop}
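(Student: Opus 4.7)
The easy direction is PST $\Rightarrow$ PGST: simply take the constant sequence $t_k = t_0$ in the definition (\ref{IE2}). So the proof reduces to showing that, assuming $G$ is periodic, PGST between two distinct vertices $u$ and $v$ forces PST between them.

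My plan is to exploit periodicity to reduce the long sequence of ``approximate PST times'' to a single actual PST time via a compactness argument. Since $G$ is periodic, there is some $\tau\neq 0$ and some unimodular $\eta\in\Cl$ with $H(\tau)=\eta I$. By the semigroup property $H(t+\tau)=H(t)H(\tau)=\eta H(t)$, so for every $n\in\Zl$ and $s\in\Rl$,
\[
H(n\tau+s) = \eta^{n}H(s).
\]
Assuming PGST from $u$ to $v$, fix a sequence $\{t_k\}$ with $H(t_k)\mathbf{e}_u\to\gamma\mathbf{e}_v$ for some unimodular $\gamma$. Write $t_k=n_k\tau+s_k$ with $s_k\in[0,\tau)$ (assume $\tau>0$ without loss of generality).

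Next I would pass to subsequences twice. By compactness of $[0,\tau]$ there is a subsequence $s_{k_j}\to s_0\in[0,\tau]$, and by compactness of the unit circle a further subsequence along which $\eta^{\,n_{k_j}}\to\beta$ with $|\beta|=1$. Using continuity of $t\mapsto H(t)$ and combining these two limits, the identity $H(t_{k_j})\mathbf{e}_u=\eta^{\,n_{k_j}}H(s_{k_j})\mathbf{e}_u$ gives
\[
\beta\, H(s_0)\mathbf{e}_u \;=\; \gamma\,\mathbf{e}_v,
\]
so $H(s_0)\mathbf{e}_u=(\gamma/\beta)\mathbf{e}_v$ with $|\gamma/\beta|=1$. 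This is precisely PST between $u$ and $v$ at time $s_0$, provided $s_0\neq 0$. But if $s_0=0$ then $H(s_0)\mathbf{e}_u=\mathbf{e}_u$ would have to be a unit multiple of $\mathbf{e}_v$, impossible for $u\neq v$; the degenerate case $u=v$ is itself PST (the graph is periodic at $u$).

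I expect the only subtle point to be the double extraction of subsequences and the justification that the resulting $s_0$ witnesses PST and not merely a trivial identity at $t=0$; everything else is a direct consequence of the semigroup law for $H(t)$ and the continuity of the matrix exponential. No algebraic structure beyond periodicity is needed, so the argument works for arbitrary graphs, not just circulants.
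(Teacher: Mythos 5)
Your proof is correct. It rests on the same underlying principle as the paper's --- periodicity folds all the approximate-transfer times back into the compact interval $[0,\tau]$, and compactness then extracts an exact transfer time --- but the mechanism is genuinely different. The paper observes that $\left|{\bf e}_v^T H(t+\tau){\bf e}_u\right|=\left|{\bf e}_v^T H(t){\bf e}_u\right|$, so the modulus of the $(u,v)$-entry of $H(t)$ is a continuous periodic scalar function whose supremum (which PGST forces to equal $1$) is attained by the extreme value theorem; unitarity then upgrades $|{\bf e}_v^TH(\tau_0){\bf e}_u|=1$ to $H(\tau_0){\bf e}_u=\gamma'{\bf e}_v$. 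You instead write $t_k=n_k\tau+s_k$ and pass to the limit in the vector identity $H(t_k){\bf e}_u=\eta^{n_k}H(s_k){\bf e}_u$ after two subsequence extractions. Your route is slightly longer but buys two things: it yields the full vector equation $H(s_0){\bf e}_u=(\gamma/\beta){\bf e}_v$ directly, with the phase tracked, instead of passing through the modulus; and it pins the PST time down to a limit point of the fractional parts $s_k$. One remark: you invoke global periodicity $H(\tau)=\eta I$, whereas the paper's argument uses only periodicity at the single vertex $u$, i.e.\ $H(\tau){\bf e}_u=\gamma{\bf e}_u$; your argument adapts verbatim, since $H(n\tau+s){\bf e}_u=\gamma^{\,n}H(s){\bf e}_u$ already follows from periodicity at $u$ alone. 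Your treatment of the degenerate case $s_0=0$ is right, and the same reasoning shows $s_0\neq\tau$ when $u\neq v$, so no further care is needed there.
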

\begin{proof}
It is easy to observe that if a graph $G$ has PST then it must also have PGST. On the other hand, suppose that a graph is periodic at $\tau$ and therefore for a vertex $u$ of $G$ there exists $\gamma\in\Cl$ with $|\gamma|=1$ so that $H(\tau){\bf e}_u=\gamma{\bf e}_u$. For $u,v\in V(G)$, the $uv$-th entry of $H(t)$ is a continuous function of real numbers. Also observe that, for $t\in\Rl$,
\begin{eqnarray*}
\left|{\bf e}_v^T H(t+\tau){\bf e}_u\right| = \left|{\bf e}_v^T H(t)H(\tau){\bf e}_u\right|
&=& \left|{\bf e}_v^T H(t)\left(\gamma{\bf e}_u\right)\right|,\text{ as }H(\tau){\bf e}_u=\gamma{\bf e}_u\\
&=& \left|{\bf e}_v^T H(t){\bf e}_u\right|.
\end{eqnarray*}
Therefore, the modulus of the $uv$-th entry of $H(t)$ is a periodic continuous function and so its image is a compact set. Since $H(t)$ is unitary, the modulus of each entry of $H(t)$ is bounded by $1$ for all $t\in\Rl$. Finally, if it satisfies Equation \ref{IE2} then by extreme value theorem there exists $\tau_0\in\Rl$ so that $|{\bf e}_v^T H(\tau_0){\bf e}_u|=1,$ \emph{i.e,} the graph $G$ has PST between $u$ and $v$. Hence the result follows.
\end{proof}
In the following section we introduce some previously known results on PGST in circulant graphs and after that we include the main results with some examples for convenience.

\section{Pretty Good State Transfer on Circulant Graphs}
We begin the discussion with few relevant results on circulant graphs exhibiting PGST appearing in \cite{pal0,pal4}. It is well known that if a graph $G$ admits PGST between two vertices $u$ and $v$ then all automorphisms of $G$ that fix $u$ must also fix $v.$ This leads to the conclusion which we present as follows.
\begin{lem}\label{L1}\cite{pal4}
If pretty good state transfer occurs in a circulant graph $Cay\left(\Zl_n,S\right)$ then $n$ is even and it occurs only between the pair of vertices $u$ and $u+\frac{n}{2}$, where $u,u+\frac{n}{2}\in\Zl_n$.
\end{lem}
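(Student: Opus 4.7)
The plan is to use the automorphism criterion recalled in the sentence immediately preceding the lemma: if PGST occurs between $u$ and $v$ in $G$, then every automorphism of $G$ fixing $u$ must also fix $v$. My task therefore reduces to producing one particular automorphism of $Cay(\Zl_n,S)$ which fixes $u$ and forces a strong constraint on $v$.

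First I would consider the map $\phi:\Zl_n\to\Zl_n$ defined by $\phi(x)=2u-x$. To confirm that $\phi$ is an automorphism of $Cay(\Zl_n,S)$, I would check that for any $x,y\in\Zl_n$, $\phi(x)-\phi(y)=-(x-y)$; since $S$ is symmetric, $x-y\in S$ if and only if $\phi(x)-\phi(y)\in S$, so $\phi$ preserves adjacency. Clearly $\phi(u)=u$.

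Assuming PGST occurs between vertices $u$ and $v$, the automorphism criterion forces $\phi(v)=v$, i.e., $2u-v\equiv v\pmod n$, which gives $2(v-u)\equiv 0\pmod n$. Since PGST is between distinct vertices, $v-u\not\equiv 0\pmod n$; so $n$ cannot divide $v-u$, but must divide $2(v-u)$. This is possible only when $n$ is even and $v-u=\tfrac{n}{2}$, i.e., $v=u+\tfrac{n}{2}$.

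The argument is short and the only subtle point is justifying the automorphism criterion itself, but since the paper treats it as known (and it follows quickly from the fact that any permutation matrix of an automorphism commutes with the adjacency matrix and hence with $H(t)$), no real obstacle is expected. I would simply close by noting that the same reasoning applied to any vertex $u$ shows that PGST can occur only between antipodal pairs $\{u,u+\tfrac{n}{2}\}$ in $\Zl_n$.
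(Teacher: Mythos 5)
Your proposal is correct and follows exactly the route the paper indicates: it invokes the automorphism criterion stated immediately before the lemma and instantiates it with the inversion map $x\mapsto 2u-x$, which is an automorphism because $S$ is symmetric, forcing $2(v-u)\equiv 0\pmod n$ and hence $v=u+\frac{n}{2}$ with $n$ even. This is the standard argument from the cited reference, so there is nothing to add.
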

A circulant graph $G$ being vertex transitive, for any pair of vertices $u$, $v$ in $G$ there exists an automorphism mapping $u$ to $v$. Let $A$ be the adjacency matrix of $G$. Note that the transition matrix $H(t)$ of $G$ can be realised as a polynomial in $A$. If $P$ is the matrix of an automorphism of $G$ then $P$ commutes with $A$ as well as $H(t)$. If $G$ exhibits PGST between two vertices $u$ and $v$ then Equation \ref{IE2} infers that
\[\lim_{k\rightarrow\infty} H(t_k) \left(Pe_u\right)=\gamma \left(Pe_v\right).\]
by Lemma \ref{L1}, it is enough to find PGST in a circulant graph between the pair of vertices $0$ and $\frac{n}{2}$. Let the spectral decomposition of the adjacency matrix of $C_n$ be $A=\sum\limits_{l=0}^{n-1}\lambda_lE_l,$ where $E_l=\frac{1}{n}\vl_l\vl_l^*$ and $\lambda_l,~\vl_l$ is as mentioned in Equation \ref{IE3}. Therefore, from Equation \ref{IE1}, the transition matrix of $C_n$ is evaluated as
\[H(t)=\exp{\left(-itA\right)}=\sum\limits_{l=0}^{n-1}\exp{\left(-i\lambda_l t\right)E_l}.\]
Note that $\left(0,\frac{n}{2}\right)$-th entry of $E_l$ is $\frac{1}{n}\omega_n^{-\frac{nl}{2}}$. Hence $(0,\frac{n}{2})$-th entry of $H(t)$ is given by 
\begin{eqnarray}\label{E1}
H(t)_{0,\frac{n}{2}}=\frac{1}{n}\sum\limits_{l=0}^{n-1}\exp{\left(-i\lambda_l t\right)}\cdot\omega_{n}^{-\frac{nl}{2}}=\frac{1}{n}\sum\limits_{l=0}^{n-1}\exp{\left[-i\left(\lambda_l t+l\pi\right)\right]}.
\end{eqnarray}
It is shown in \cite{saxe} that if a cycle admits PST then it must be integral and this leads to the conclusion that only $C_4$ exhibits PST (see \cite{mil4}). We further investigated and in \cite{pal4}, we have found a characterization of all cycles admitting PGST. 
\begin{theorem}\label{T1}\cite{pal4}
A cycle $C_n$ admits pretty good state transfer if and only if $n=2^k$ for $k\geq 2.$
\end{theorem}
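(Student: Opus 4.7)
The plan is to reduce via Equation~\ref{E1} to a system of simultaneous mod-$1$ approximations, then apply Kronecker's theorem (for sufficiency) or exhibit its obstruction (for necessity). By Lemma~\ref{L1}, I may restrict to PGST between $0$ and $n/2$, and Equation~\ref{E1} expresses $H(t)_{0,n/2}$ as the arithmetic mean of $n$ unit complex numbers $\exp[-i(\lambda_l t + l\pi)]$; so $|H(t_k)_{0,n/2}|\to 1$ forces all these summands to converge to a common limit $\gamma$. Writing $t = 2\pi q$ and $\gamma = e^{-2\pi i\psi}$ reduces this to the mod-$1$ system $q\lambda_l \equiv \psi - l/2 \pmod 1$ for every $l$, with $q$ integer (as in the Kronecker statement).

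For sufficiency ($n = 2^k$, $k\geq 2$), the case $k=2$ is immediate since $C_4$ is integral and $H(\pi/2)e_0 = -e_2$. For $k\geq 3$, I take $\psi = 0$; the targets become half-integer for odd $l$ and integer for even $l$. The $\lambda_l$ lie in the real cyclotomic field $\mathbb{Q}(\zeta_{2^k})^+$ of degree $2^{k-2}$ over $\mathbb{Q}$. I would choose a maximal $\mathbb{Q}$-linearly independent subset of $\{1\}\cup\{\lambda_l\}$ and apply Kronecker's theorem to it to produce the required integer $q$. The remaining targets are then forced by the palindromic identity $\lambda_l = \lambda_{n-l}$ and the antipalindromic identity $\lambda_{n/2-l} = -\lambda_l$ (from $\cos(\pi-x)=-\cos x$), both of which are consistent with the targets.

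For necessity, suppose $n$ is even but not a power of $2$, so $n = 2^a m$ with $a\geq 1$ and $m\geq 3$ odd. The $l=0$ and $l=n/2$ conditions yield $\psi \equiv n/4 \pmod 1$ and $q \equiv \psi/2 \pmod 1$; when $a\geq 2$, the rational eigenvalue $\lambda_{n/4} = 2\cos(\pi/2) = 0$ pins $\psi$ to a unique value in $\{0,1\}$. The strategy is to exhibit a $\mathbb{Z}$-linear relation $\sum_l a_l \lambda_l = c$ with $c\in\mathbb{Q}$ whose image in the mod-$1$ system contradicts these constraints. For $a=1$, the Galois-trace identity $\sum_{j\in J_p}\lambda_j^{(C_{2p})} = \mu(2p) = \pm 1$ (summed over the Galois orbit of $\lambda_1^{(C_{2p})}$, for any odd prime $p\mid m$), pulled back into $C_n$'s spectrum at the indices $j\cdot n/(2p)$, gives the contradiction directly.

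The main obstacle is the case $a\geq 3$ (so $\psi = 0$), where the M\"obius-trace constraint alone becomes consistent. Here one must use finer relations; the $C_{24}$ case is typical, as the identity $\lambda_1 - \lambda_5 = \lambda_3$ (arising from $\mathbb{Q}(\zeta_{24})^+ = \mathbb{Q}(\sqrt{2},\sqrt{3})$), combined with the fact that the targets at $l=1,3,5$ all share the same residue mod $2$, forces $\psi\equiv 1\pmod 2$, contradicting $\psi = 0$. I would systematise this via the decomposition $\mathbb{Q}(\zeta_n)^+ = \mathbb{Q}(\zeta_{2^a})^+\cdot\mathbb{Q}(\zeta_m)^+$ and induct on $a$, reducing the obstruction for $C_n$ to an obstruction for a smaller sub-circulant whose odd part is still $m$.
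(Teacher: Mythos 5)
The sufficiency half of your plan is sound and is essentially the approach this paper takes: Theorem \ref{T1} is only quoted here from \cite{pal4}, but Lemma \ref{L3} with $d=1$ is precisely the computation you describe --- Kronecker's theorem applied to the $\Ql$-linearly independent set $1,\lambda_1,\ldots,\lambda_{2^{k-2}-1}$, targets $l/2 \bmod 1$ depending only on the parity of $l$, and propagation to the remaining eigenvalues via $\lambda_l=\lambda_{n-l}=-\lambda_{\frac{n}{2}\pm l}$ (consistent because $\frac{n}{2}$ is even and $-\frac12\equiv\frac12\pmod 1$). Together with the $C_4$ base case this gives the ``if'' direction correctly.

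The ``only if'' direction is where you have a genuine gap, in two places. First, you reduce PGST to the system $q\lambda_l\equiv\psi-l/2\pmod 1$ ``with $q$ integer,'' but Equation \ref{IE2} allows an arbitrary real sequence $\left\lbrace t_k\right\rbrace$; you may restrict the times to $2\pi\Zl$ only in the direction where you get to choose them. Hence the congruences you then extract ($\psi\equiv n/4$, $q\equiv \psi/2\pmod 1$) are not consequences of PGST, and everything built on them is unsupported. The obstruction must be stated $t$-independently: from $\exp[-i(\lambda_l t_k+l\pi)]\to\gamma$ for every $l$, each integer relation $\sum_l a_l\lambda_l=0$ forces $\gamma^{\sum_l a_l}=(-1)^{\sum_l a_l l}$, whereas a relation with nonzero rational value $c$ constrains nothing until you eliminate the unknown limit of $e^{-ict_k}$ (so your M\"obius-trace identity, as written, does not ``give the contradiction directly''). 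Second, for $8\mid n$ you offer only the worked example $C_{24}$ and a proposal to ``systematise and induct,'' which is a plan rather than a proof. Both defects are repaired by exhibiting uniform relations: for any odd prime $p\mid n$, $\sum_{j=0}^{p-1}\lambda_{jn/p}=0$ gives $\gamma^{p}=(-1)^{n(p-1)/2}=1$, while $\sum_{j=0}^{p-1}\lambda_{1+jn/p}=0$ has index-sum $p+\frac{n}{2}(p-1)\equiv 1\pmod 2$ and gives $\gamma^{p}=-1$; these two alone contradict each other for every even $n$ that is not a power of $2$, with no case split on $a$ and no induction needed.
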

 In \cite{wal1}, it appears that the eigenvectors of a Cayley graph over an abelian group are independent of the connection set. Therefore the set of eigenvectors of two Cayley graphs defined over the same abelian group can be chosen to be equal. Hence we have the following result appearing in \cite{pal1}.
\begin{prop}\cite{pal1}\label{IP3}
If $S_1$ and $S_2$ are symmetric subsets of an abelian group $\Gamma$ then adjacency matrices of the Cayley garphs $Cay(\Gamma, S_1 )$ and $Cay(\Gamma, S_2 )$ commute.
\end{prop}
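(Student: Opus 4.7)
The plan is to prove the stronger statement that both adjacency matrices can be simultaneously diagonalized in a basis of $\Cl^{|\Gamma|}$ depending only on $\Gamma$, not on the connection set; commutativity then follows at once. The natural candidate for such a basis is the one furnished by the irreducible characters of the abelian group $\Gamma$.

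First I would enumerate the $n=|\Gamma|$ characters $\chi_1,\ldots,\chi_n$ of $\Gamma$ and, for each $j$, define a vector $\vl_j\in\Cl^n$ indexed by $\Gamma$ with entries $(\vl_j)_g=\chi_j(g)$. The central step is to verify that $\vl_j$ is an eigenvector of the adjacency matrix $A_S$ of $Cay(\Gamma,S)$ for every symmetric $S\subseteq\Gamma$, with eigenvalue $\lambda_j^{(S)}=\sum_{s\in S}\chi_j(s)$. This follows from a one-line character computation using $\chi_j(g-s)=\chi_j(g)\chi_j(-s)$, after re-indexing the sum $\sum_{h:\,g-h\in S}\chi_j(h)$ over $s=g-h\in S$. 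The crucial point is that the eigenvector $\vl_j$ itself does not involve $S$.

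Applying this twice---once to $S=S_1$ and once to $S=S_2$---shows that the adjacency matrices of $Cay(\Gamma,S_1)$ and $Cay(\Gamma,S_2)$ are simultaneously diagonalized in the common basis $\{\vl_1,\ldots,\vl_n\}$, which immediately yields $A_{S_1}A_{S_2}=A_{S_2}A_{S_1}$.

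There is no serious obstacle, as this is essentially the standard character-theoretic diagonalization of Cayley graphs over abelian groups, already alluded to in the paragraph preceding the statement via the reference to \cite{wal1}. An alternative, more elementary route would be to compute the $(g,h)$ entry of $A_{S_1}A_{S_2}$ directly as the number of representations $g-h=s_1+s_2$ with $s_i\in S_i$, and then invoke commutativity of $\Gamma$ to swap the two factors; this avoids character theory entirely but is less illuminating and does not reveal the simultaneous diagonalization, which will be useful elsewhere in the paper.
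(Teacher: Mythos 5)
Your proposal is correct and follows essentially the same route the paper takes: the paper gives no explicit proof but justifies the proposition, in the preceding paragraph, by exactly the observation that the character vectors form a common eigenbasis independent of the connection set (citing \cite{wal1}), from which simultaneous diagonalizability and hence commutativity follow. Your character computation and the elementary counting alternative are both sound; no gaps.
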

Let the eigenvalues of a circulant graph $Cay(\Zl_n, S)$ be $\theta_l,$ for $l=0,1,\ldots,n-1$. If $H_S(t)$ is the transition matrix of $Cay(\Zl_n, S)$ then, by Proposition \ref{IP3}, we obtain
\begin{eqnarray}
H_S(t)_{0,\frac{n}{2}} &=&\frac{1}{n}\sum\limits_{l=0}^{n-1}\exp{\left[-i\left(\theta_l t+l\pi\right)\right]},\label{E2}\\
H_S(t)_{0,0}&=&\frac{1}{n}\sum\limits_{l=0}^{n-1}\exp{\left(-i\theta_l t\right)}.\label{E2a}
\end{eqnarray}
A graph $G$ is said to be almost periodic if there is a sequence $\left\lbrace t_k\right\rbrace$ of non-zero real numbers and a complex number $\gamma$ of unit modulus such that $\lim\limits_{k\rightarrow\infty} H(t_k) =\gamma I,$ where $I$ is the identity matrix of appropriate order. Since the circulant graphs are vertex transitive, we observe that a circulant graph is almost periodic if and only if $\lim\limits_{k\rightarrow\infty} H(t_k) {\bf e}_0=\gamma {\bf e}_0.$ This implies that if a cycle admits PGST then it is necessarily almost periodic. Next we find a connection between the time sequences of two cycles one of which exhibits PGST and the other one is almost periodic.

\begin{lem}\label{L2}\cite{pal0}
Let $k,k'\in \Nl$ and $2\leq k'<k$. If the cycle $C_{2^k}$ admits pretty good state transfer with respect to a sequence $\left\lbrace t_m\right\rbrace$ then $C_{2^{k'}}$ is almost periodic with respect to a sub-sequence of $\left\lbrace t_m\right\rbrace$. 
\end{lem}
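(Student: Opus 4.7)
The plan is to exploit the closed form~(\ref{E1}) for the $(0, 2^{k-1})$-entry of the transition matrix of $C_{2^k}$ in order to extract strong pointwise information about the phases $\exp(-i\lambda_l t_m)$, and then to transport this information to the spectrum of $C_{2^{k'}}$ through the explicit eigenvalue formula~(\ref{IE3}). The starting observation is that~(\ref{E1}) expresses $H(t)_{0, 2^{k-1}}$ as the arithmetic mean of the $2^k$ unit-modulus complex numbers $(-1)^l\exp(-i\lambda_l t)$. A short triangle-inequality argument shows that the mean of $N$ unit-modulus complex numbers can converge to a limit of modulus~$1$ only if every summand converges individually to that same limit. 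Combined with the PGST hypothesis $\lim_{m\to\infty} H(t_m){\bf e}_0 = \gamma\,{\bf e}_{2^{k-1}}$, this yields
\[\exp\left(-i\lambda_l t_m\right)\longrightarrow (-1)^l\gamma \qquad \text{as } m\to\infty, \qquad l=0,1,\ldots,2^k-1.\]

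Next I would locate the spectrum of $C_{2^{k'}}$ inside the spectrum of $C_{2^k}$. Using~(\ref{IE3}), the $j$-th eigenvalue of $C_{2^{k'}}$ is $\mu_j = 2\cos(2\pi j/2^{k'}) = \lambda_{l_j}$, where $l_j := j\cdot 2^{k-k'}$. Because $k' < k$, the factor $2^{k-k'}$ is even, so each $l_j$ is even and $(-1)^{l_j}=1$. Specializing the alignment conclusion of the previous step to $l=l_j$ gives $\exp\left(-i\mu_j t_m\right)\to\gamma$ for every $j\in\{0,1,\ldots,2^{k'}-1\}$.

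Plugging this into the diagonal-entry formula~(\ref{E2a}) applied to $C_{2^{k'}}$ yields $H_{C_{2^{k'}}}(t_m)_{0,0}\to\gamma$; vertex-transitivity of $C_{2^{k'}}$ makes the same limit hold for every diagonal entry, and unitarity forces the off-diagonal entries to vanish once the diagonal entries have modulus tending to~$1$. Hence $H_{C_{2^{k'}}}(t_m)\to\gamma I$, which is precisely almost periodicity along $\{t_m\}$; should the definition in the excerpt insist on non-zero times, one simply passes to a sub-sequence of $\{t_m\}$ omitting any zero entries. The only delicate ingredient in the argument is the alignment observation underlying the first step; I expect that to be the main (though short) obstacle, while the remaining steps are direct manipulations of the explicit eigenvalues $\lambda_l = 2\cos(2\pi l/2^k)$.
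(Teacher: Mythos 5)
Your proof is correct: the phase--alignment step (an average of unit--modulus numbers can tend to a unimodular limit only if every summand tends to that limit) legitimately yields $\exp(-i\lambda_l t_m)\to(-1)^l\gamma$, and since every eigenvalue of $C_{2^{k'}}$ equals $\lambda_{j\cdot 2^{k-k'}}$ with $j\cdot 2^{k-k'}$ even, the conclusion $H(t_m)\to\gamma I$ for $C_{2^{k'}}$ follows. The paper only cites this lemma from \cite{pal0} without reproducing its proof, but your route is essentially the one the surrounding text relies on (compare the alignment/compactness argument in the final example and the use of Equations \ref{E1} and \ref{E2a}), so this matches the intended approach.
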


Apart from the cycles, using Lemma \ref{L2}, Theorem \ref{T1} has been extended in \cite{pal0} to obtain more circulant graphs on $2^k$ vertices exhibiting PGST. Also, there we find some circulant graphs which are almost periodic. We include the result for convenience.  

\begin{theorem}\label{T2}\cite{pal0}
Let $k\in\Nl$ and $n=2^k$. Also let $Cay\left(\Zl_n,S\right)$ be a non-integral circulant graph. Let $d$ be the least among all the divisors of $n$ so that $S\cap S_n(d)$ is a non-empty proper subset of $S_n(d)$. If $\left|S\cap S_n(d)\right|\equiv 2\pmod{4}$ then $Cay\left(\Zl_n,S\right)$ admits pretty good state transfer with respect to a sequence in $2\pi\Zl$. Moreover, if $\left|S\cap S_n(d)\right|\equiv 0\pmod{4}$ then $Cay\left(\Zl_n,S\right)$ is almost periodic with respect to a sequence in $2\pi\Zl$.
\end{theorem}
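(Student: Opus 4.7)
The plan is to factor $H_S(t)$ using the disjoint-union structure of $S$ afforded by Proposition \ref{IP1}, isolate a piece that trivializes at integer multiples of $2\pi$, and reduce the remaining factors to products of cycle transition matrices whose PGST (Theorem \ref{T1}) and almost-periodic (Lemma \ref{L2}) limits interact through the identity $\sigma^{2}=I$, where $\sigma\colon x\mapsto x+\frac{n}{2}$ is the translation permutation on $\Zl_n$.

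I would first decompose $S=S_{\mathrm{int}}\sqcup S'\sqcup \tilde S$, where $S_{\mathrm{int}}$ is the union of those full gcd-classes $S_n(d')$ with $d'<d$ that are contained in $S$ (so $Cay(\Zl_n,S_{\mathrm{int}})$ is integral by Theorem \ref{so}), $S'=S\cap S_n(d)$, and $\tilde S$ collects the remaining elements of $S$, which lie in $\bigcup_{d''>d}S_n(d'')$. Pair them as $S'=\bigsqcup_{i=1}^{r}\{du_i,-du_i\}$ with $r=|S'|/2$ and $\tilde S=\bigsqcup_j\{v_j,-v_j\}$. Propositions \ref{IP1} and \ref{IP3} then yield
$$H_S(t)=H_{S_{\mathrm{int}}}(t)\,\prod_{i=1}^{r}H_{\{du_i,-du_i\}}(t)\,\prod_{j}H_{\{v_j,-v_j\}}(t),$$
and for $t=2\pi M$ with $M\in\Zl$ the integrality of $Cay(\Zl_n,S_{\mathrm{int}})$ forces $H_{S_{\mathrm{int}}}(2\pi M)=I$. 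Each $Cay(\Zl_n,\{du_i,-du_i\})$ is a disjoint union of $d$ copies of $C_{n/d}=C_{2^{k-j}}$ (with $k-j\geq 2$); the map $x\mapsto \tilde w_i x\pmod{n}$, where $\tilde w_i$ is any odd integer congruent to $u_i u_1^{-1}$ modulo $n/d$, gives a graph isomorphism $Cay(\Zl_n,\{du_1,-du_1\})\to Cay(\Zl_n,\{du_i,-du_i\})$ fixing both $0$ and $n/2$, so the transition matrices have identical $(0,n/2)$-entries. By Theorem \ref{T1} and its proof in \cite{pal4}, there is a PGST sequence $\{2\pi M_m\}\subset 2\pi\Zl$ for $C_{n/d}$; along it every $H_{\{du_i,-du_i\}}(2\pi M_m)$ converges to $\gamma\,\sigma$ for a common unit scalar $\gamma$. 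Each $Cay(\Zl_n,\{v_j,-v_j\})$ is a union of copies of a strictly smaller cycle $C_{n/d''}$, and by Lemma \ref{L2} with a diagonal extraction there is a subsequence $\{2\pi M_k\}$ along which $H_{\{v_j,-v_j\}}(2\pi M_k)\to \gamma_j' I$ simultaneously for every $j$. Combining the limits,
$$H_S(2\pi M_k)\longrightarrow \gamma^{r}\Bigl(\prod_j \gamma_j'\Bigr)\sigma^{\,r}.$$

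Since $\sigma^{2}=I$, we have $\sigma^{r}=\sigma$ when $r$ is odd and $\sigma^{r}=I$ when $r$ is even. In Case~1, $|S'|\equiv 2\pmod 4$ makes $r$ odd, so the limit carries ${\bf e}_0$ to a unit scalar multiple of ${\bf e}_{n/2}$, giving PGST. In Case~2, $|S'|\equiv 0\pmod 4$ makes $r$ even, so the limit equals a unit scalar times $I$, giving almost periodicity. The principal obstacle is the joint extraction: verifying that Theorem \ref{T1} supplies a PGST sequence actually inside $2\pi\Zl$ (and that all the $r$ cycle-factors, despite differing connection sets, converge to $\gamma\sigma$ with one common $\gamma$ along it), and then iterating Lemma \ref{L2} carefully so that every $\tilde S$-pair graph is driven to a scalar identity along one \emph{common} subsequence. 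Once this uniformity is in place, the dichotomy reduces to the parity fact $\sigma^{r}\in\{\sigma,\,I\}$ governed by $|S'|/2\pmod 2$.
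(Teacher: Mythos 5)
Your argument is correct, and its overall skeleton matches the strategy the paper uses for the companion result Theorem \ref{T3} (the paper only cites Theorem \ref{T2} from \cite{pal0} rather than reproving it): factor $H_S$ over the gcd-classes via Proposition \ref{IP1}, trivialize the classes below $d$ by integrality at $t\in 2\pi\Zl$, trivialize the classes above $d$ by Lemma \ref{L2} together with finitely many subsequence extractions, and read the dichotomy off the class $S\cap S_n(d)$. Where you genuinely depart from the source is in the middle class. The paper runs Kronecker's theorem once (Lemma \ref{L3}) on the distinct positive eigenvalues of $C_n$ with targets $\alpha_l\in\{0,\tfrac12\}$, so that the phases of the summed eigenvalues $\theta_l=\tfrac12\sum_{s}\lambda_{ls}$ land near $2\pi\Zl$, the residue of $|S\cap S_n(d)|$ modulo $4$ entering analytically through the accumulated half-integer shifts; this controls every eigenvalue of every sub-connection-set at one common $t=2q\pi$ (no subsequence needed for the $d$- and $d'$-classes) and yields $\gamma=1$. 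You instead split $S\cap S_n(d)$ into $r=|S\cap S_n(d)|/2$ symmetric pairs, synchronize them with the multiplier isomorphisms $x\mapsto\tilde w_i x$ (which fix $0$ and $\tfrac n2$ and commute with $\sigma$, so all $r$ factors converge to the \emph{same} $\gamma\sigma$), and extract the mod-$4$ dichotomy algebraically from $\sigma^{2}=I$. That buys a more transparent explanation of why the threshold is $4$ and reuses Theorem \ref{T1} essentially as a black box, at the cost of two upgrades you correctly flag and which do go through: the PGST sequence for $C_{n/d}$ can indeed be taken in $2\pi\Zl$ (this is exactly what Lemma \ref{L3} with $k'=0$ produces), and entrywise PGST upgrades to full matrix convergence $H(t_m)\to\gamma\sigma$ by unitarity plus the circulant structure. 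One loose end worth a sentence in a final write-up: Lemma \ref{L2} covers $C_{2^{k'}}$ only for $k'\geq 2$, so the pairs $\{v_j,-v_j\}$ with $v_j=\tfrac n2$ (whose components are $P_2$) and more generally the integral pairs are not literally within its scope; they are harmless because their transition matrices equal $I$ exactly at every $t\in 2\pi\Zl$.
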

Note that Theorem \ref{T2} fails to decide whether $Cay\left(\Zl_n,S\right)$ admits PGST whenever $\left|S\cap S_n(d)\right|\equiv 0\pmod{4}$. Computational results suggest that few such graphs exhibits PGST. Consider the following example.
\begin{ex}\label{Ex1}
Let $n=16,~S=\{1,2,3,4,12,13,14,15\}$ and consider the circulant graph $G=Cay\left(\Zl_n,S\right)$. Observe that $S\cap S_n(1)=\{1,3,13,15\}\subsetneq S_n(1).$ Thus, when considering PGST, Theorem \ref{T2} is inconclusive in this case. However, calculating the $\left(0,\frac{n}{2}\right)$-th entry of the transition matrix of $G$, we obtain the following outcome presented in Figure \ref{f:p1} with the help of GNU Octave \cite{eat}. Later we shall show that $G$ exhibits PGST indeed.
\begin{figure}[h!]
\begin{subfigure}{0.4\textwidth}
\begin{tikzpicture}[thin, scale=0.7, every node/.style={scale=0.8}]
\grCirculant{16}{1,15,3,13,2,14,4,12}
\end{tikzpicture}
\end{subfigure}
\hfill
\begin{subfigure}{0.6\textwidth}
\includegraphics[width=25em]{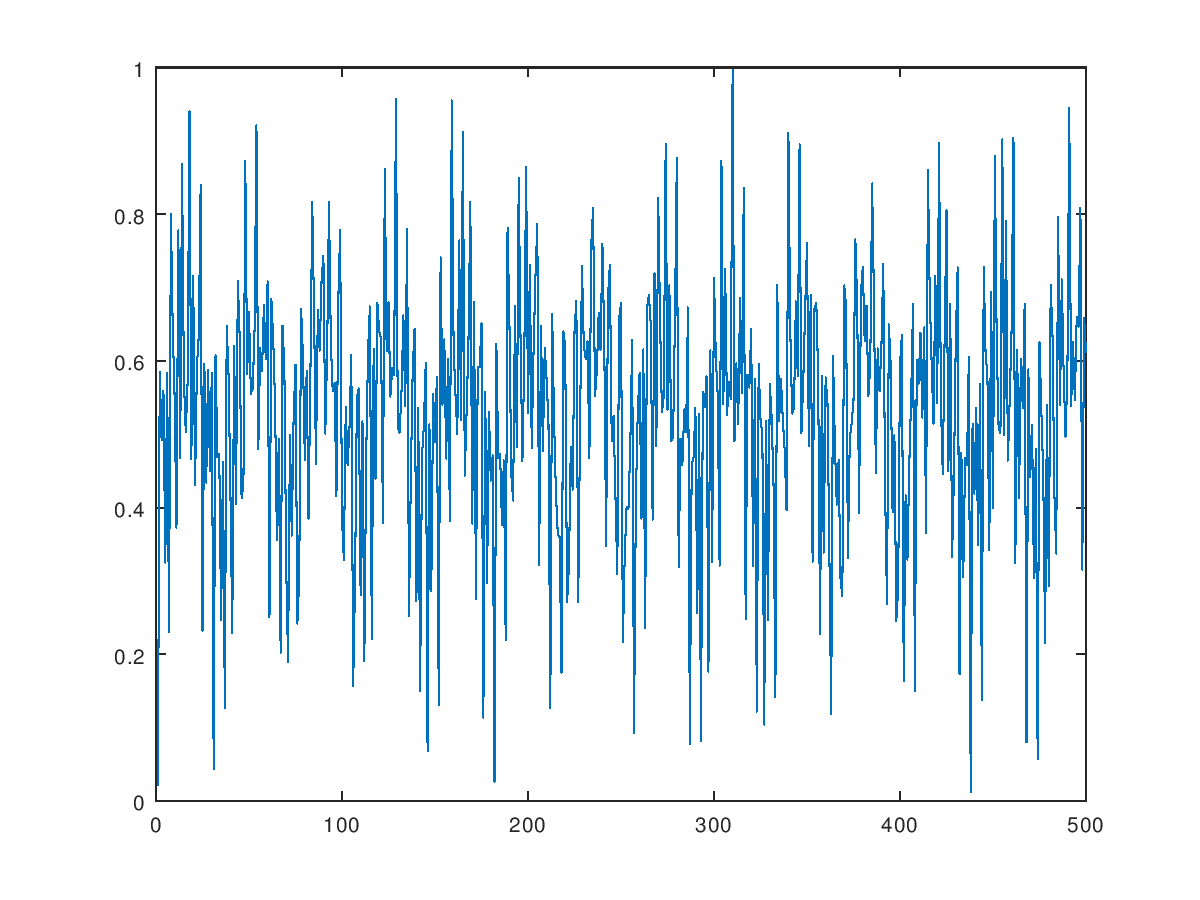}
\end{subfigure}
\caption{\small The $\left(0,\frac{n}{2}\right)$-th entry of transition matrix of $Cay\left(\Zl_n,S\right)$ at $[2*7500*\pi,~2*8000*\pi]\cap 2\pi\Zl$.}
\label{f:p1}
\end{figure}
\end{ex}

Now we shall prove the following result, revisiting the proof of Lemma[6] in \cite{pal4}, on the eigenvalues of circulant graphs. 
\begin{lem}\label{L3}
Let $k,k'\in\Nl$ be such that $n=2^k$ and $d=2^{k'}$ with $\frac{n}{d}\geq 8$. Let $\lambda_l,$ where $l=0,1,\ldots,n-1$, be the eigenvalues of $C_n$. For $\delta>0$ there exists $t\in 2\pi\Zl$ so that for each $l$ there exists an integer $l'$ such that 
\begin{eqnarray*}
\begin{cases}
\left|\left(\lambda_l t+a\pi\right)- 2l'\pi\right|<\frac{\delta}{n}, & \text{ if $l=2^{k'}\cdot a$ and $a$ is odd.}\\
\left|\lambda_l t- 2l'\pi\right|<\frac{\delta}{n}, & \text{ otherwise.}
\end{cases}
\end{eqnarray*}
\end{lem}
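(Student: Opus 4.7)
The plan is to pick a family of eigenvalues of $C_n$ that is $\Ql$-linearly independent together with $1$, invoke the Kronecker approximation theorem once to obtain a common integer $q$ (so that $t = 2\pi q \in 2\pi\Zl$), and then propagate the resulting estimates to every remaining index via the symmetries $\lambda_l = \lambda_{n-l}$ and $\lambda_{n/2 \pm l} = -\lambda_l$. Writing $t = 2\pi q$, the two requested inequalities are equivalent, respectively, to $\left|q\lambda_l - l'\right| < \delta/(2\pi n)$ and $\left|q\lambda_l + a/2 - l'\right| < \delta/(2\pi n)$, so what is needed is a single $q \in \Zl$ forcing $q\lambda_l$ to be close to $\Zl$ for non-special $l$ and close to $\Zl + \tfrac{1}{2}$ for special $l = 2^{k'}a$ with $a$ odd.

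Set $\eta = \lambda_1 = 2\cos(2\pi/n)$. One has $[\Ql(\eta):\Ql] = \phi(n)/2 = n/4$, so $\{1, \eta, \eta^2, \dots, \eta^{n/4-1}\}$ is a $\Ql$-basis of $\Ql(\eta)$. The Chebyshev-type recurrence $\lambda_{l+1} = \eta\,\lambda_l - \lambda_{l-1}$ with $\lambda_0 = 2$ presents each $\lambda_l$ as a monic integer polynomial in $\eta$ of degree $l$; hence the change-of-basis matrix from $\{1, \eta, \dots, \eta^{n/4-1}\}$ to $\{1, \lambda_1, \dots, \lambda_{n/4-1}\}$ is upper triangular with unit diagonal, and consequently $\{1, \lambda_1, \dots, \lambda_{n/4-1}\}$ is itself a $\Ql$-basis. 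In particular, the numbers $\lambda_j$ $(1 \leq j \leq n/4-1)$ are real algebraic and $\Ql$-linearly independent together with $1$. Define $\alpha_j = -a/2$ whenever $j = 2^{k'}a$ with $a$ odd, and $\alpha_j = 0$ otherwise. Kronecker's theorem then produces integers $q, p_1, \dots, p_{n/4-1}$ with $\left|q\lambda_j - p_j - \alpha_j\right| < \delta/(2\pi n)$; multiplying through by $2\pi$ and taking $t = 2\pi q$ delivers the claim for $l \in \{1, \dots, n/4 - 1\}$.

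For the remaining indices, the integer eigenvalues $\lambda_l \in \{\pm 2, 0\}$ occur exactly at $l \in \{0, n/4, n/2, 3n/4\}$. The hypothesis $n/d \geq 8$, i.e., $k - k' \geq 3$, forces $\gcd(l, n) \in \{n, n/2, n/4\}$, none of which equals $2^{k'}$; so each such $l$ is non-special and the estimate holds trivially with $l' = q\lambda_l \in \Zl$. Every other index equals $n - l''$ or $n/2 \pm l''$ for a unique $l'' \in \{1, \dots, n/4-1\}$, and $\gcd(\cdot, n)$ is preserved under these operations (in the $n/2 \pm l''$ case because $2^{k-k'-1} - a_{l''}$ is odd), so the special/non-special nature passes from $l''$ to $l$. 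Writing $a_l, a_{l''}$ for the odd integers attached to special $l, l''$, a short parity check using $k - k' \geq 3$ shows that $a_l \pm a_{l''}$ is an even integer; consequently the constant term arising from the relation $\lambda_l = \pm \lambda_{l''}$ is an integer multiple of $2\pi$, which can be absorbed into the $2l'\pi$ shift without affecting the error bound $\delta/n$.

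The decisive step is the $\Ql$-linear independence of $\{1, \lambda_1, \dots, \lambda_{n/4-1}\}$; once this is in hand, Kronecker's theorem does the heavy lifting and the symmetry extension is routine bookkeeping. The hypothesis $n/d \geq 8$ enters only to keep the four integer-eigenvalue indices non-special and to guarantee that the parity shifts produced in the symmetry step land in $2\pi\Zl$.
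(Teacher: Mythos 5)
Your proof is correct and follows the paper's overall strategy: establish that $1,\lambda_1,\ldots,\lambda_{n/4-1}$ are linearly independent over $\Ql$, apply Kronecker's theorem once with targets encoding a half-integer shift at the indices $l=2^{k'}a$ with $a$ odd, note that the four integer eigenvalues sit at non-special indices because $d\le n/8$, and propagate to all $l$ via $\lambda_l=\lambda_{n-l}=-\lambda_{\frac{n}{2}\pm l}$ plus a parity check. The one place you genuinely diverge is the linear-independence step: the paper argues that a rational dependence among $\lambda_0,\ldots,\lambda_{n/4-1}$ would force $\omega_n$ to satisfy a polynomial of degree at most $2^{k-1}-1<\phi(2^k)$ (via $\omega_n^{-1}=-\omega_n^{2^{k-1}-1}$), whereas you work inside the real subfield $\Ql\left(2\cos(2\pi/n)\right)$ of degree $n/4$ and observe that the Chebyshev recurrence exhibits $1,\lambda_1,\ldots,\lambda_{n/4-1}$ as a unitriangular transform of the power basis. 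Your version is the cleaner of the two: it avoids the paper's somewhat delicate reduction of $\omega_n^{-l}$ to low degree and delivers the independence as an immediate basis statement. Your choice $\alpha_j=-a/2$ in place of the paper's $\alpha_l=\tfrac{1}{2}$ is equivalent modulo $\Zl$ and merely relocates the parity bookkeeping (the paper absorbs $a+1$ being even into its choice of $l'$, you absorb it into the Kronecker target), and your symmetry step is spelled out more explicitly than the paper's, which simply asserts the extension to all $l$.
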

\begin{proof}
We first show that the distinct positive eigenvalues of $C_n$ are linearly independent over $\Ql$. The eigenvalues of $C_n$ can be realized as $\lambda_l=\omega_n^l+\omega_n^{-l}.$ Note that for $ 1\leq l\leq 2^{k-2}-1$, the relation $\lambda_l=-\lambda_{\frac{n}{2}-l}=-\lambda_{\frac{n}{2}+l}=\lambda_{n-l}$ holds, and the remaining eigenvalues $\lambda_0,\lambda_{2^{k-2}},\lambda_{2^{k-1}}$ and $\lambda_{3\cdot2^{k-2}}$ are integers. It is well known that the minimal polynomial of $\omega_n$ over $\Ql$ has degree $\phi(n)$, where $\phi$ is Euler's phi-function (see \cite{esc}). The distinct positive eigenvalues are $\lambda_l$ where $0\leq l<\frac{n}{4}.$ If the distinct positive eigenvalues are dependent over $\Ql$ then $\omega_n$ will be a root of a polynomial of degree at most $m=2^{k-1}-1$ as $\omega_n^{-1}=-\omega_n^{m}$. But since $2^{k-1}-1<2^{k-1}= \phi(2^k),$ we conclude that the distinct positive eigenvalues are linearly independent over $\Ql$. Now, for $ 1\leq l\leq 2^{k-2}-1$, assume that
\begin{eqnarray*}
\alpha_l=\begin{cases} \frac{1}{2}, & \text{ if $l=2^{k'}\cdot a$ and $a$ is odd.}\\
0, & \text{ otherwise.}
\end{cases}
\end{eqnarray*}
By Kronecker approximation theorem, for $\delta>0$ there exist $q,m_1,\ldots,m_{2^{k-2}-1}\in\Zl$ such that for $l=1,\ldots, 2^{k-2}-1$
\begin{eqnarray}\label{L3E1}
\left|q\lambda_l-m_l-\alpha_l\right|<\frac{\delta}{2n\pi},\; \emph{i.e,}\;\left|2q\pi\lambda_l-2m_l\pi-2\alpha_l\pi\right|<\frac{\delta}{n}.
\end{eqnarray} 
Since $\lambda_0,\lambda_{2^{k-2}},\lambda_{2^{k-1}}$ and $\lambda_{3\cdot2^{k-2}}$ are integers, considering $t=2q\pi$, we observe that for each $l=0,\;2^{k-2},\;2^{k-1},\;3\cdot2^{k-2}$ there exists an integer $l'$ such that
\[\left|\lambda_l t- 2l'\pi\right|<\frac{\delta}{n}.\]
Also for each $l=1,\ldots, 2^{k-2}-1$, considering $t=2q\pi$ and using Equation \ref{L3E1}, we find
\begin{eqnarray*}
l'=\begin{cases} \frac{2m_l+a+1}{2}, & \text{ if $l=2^{k'}\cdot a$ and $a$ is odd.}\\
m_l, & \text{ otherwise.}
\end{cases}
\end{eqnarray*}
such that
\begin{eqnarray*}
\begin{cases}
\left|\left(\lambda_l t+a\pi\right)- 2l'\pi\right|<\frac{\delta}{n}, & \text{ if $l=2^{k'}\cdot a$ and $a$ is odd.}\\
\left|\lambda_l t- 2l'\pi\right|<\frac{\delta}{n}, & \text{ otherwise.}
\end{cases}
\end{eqnarray*}
Since $\lambda_l=-\lambda_{\frac{n}{2}-l}=-\lambda_{\frac{n}{2}+l}=\lambda_{n-l}$ holds for $ 1\leq l\leq 2^{k-2}-1$, considering $t=2q\pi$, we conclude that for each $l=0,\ldots, n-1,$ there is an integer $l'$ such that
\begin{eqnarray*}
\begin{cases}
\left|\left(\lambda_l t+a\pi\right)- 2l'\pi\right|<\frac{\delta}{n}, & \text{ if $l=2^{k'}\cdot a$ and $a$ is odd.}\\
\left|\lambda_l t- 2l'\pi\right|<\frac{\delta}{n}, & \text{ otherwise.}
\end{cases}
\end{eqnarray*}
This completes the proof.
\end{proof}
It is well known that if a circulant graph exhibits PGST then it is almost periodic. In Theorem \ref{T2}, we find that if $\left|S\cap S_n(d)\right|\equiv 0\pmod{4}$ then the graph $Cay\left(\Zl_n,S\right)$ is almost periodic, and thus presumably many such graphs exhibits PGST. We present a subclass of those circulant graphs admitting PGST extending Theorem \ref{T2}.
\begin{theorem}\label{T3}
Let $k\in\Nl$ and $n=2^k$. Also let $Cay\left(\Zl_n,S\right)$ be a non-integral circulant graph. Let $d$ be the least among all the divisors of $n$ so that $S\cap S_n(d)\neq \emptyset$ and $S\cap S_n(d)\subsetneq S_n(d)$ with $\left|S\cap S_n(d)\right|\equiv 2\pmod{4}$. Then $Cay\left(\Zl_n,S\right)$ admits pretty good state transfer with respect to a sequence in $2\pi\Zl$.
\end{theorem}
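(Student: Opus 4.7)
My plan is to derive the pretty good state transfer directly from Equation \ref{E2} by applying Lemma \ref{L3} with $k'=\log_2 d$ to select a time $t=2q\pi$ for which every phase $\theta_l t + l\pi$ in $\frac{1}{n}\sum_l\exp[-i(\theta_l t + l\pi)]$ approaches a common value modulo $2\pi$. First I would verify Lemma \ref{L3} applies: the properness of $S\cap S_n(d)$ combined with $|S\cap S_n(d)|\equiv 2\pmod 4$ forces $|S_n(d)|=\phi(n/d)\geq 4$, so $n/d\geq 8$ and also $d\leq n/8$.

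Write the symmetric set as $S=(\{n/2\}\cap S)\sqcup\bigsqcup_j\{s_j,-s_j\}$ so that the eigenvalues of $Cay(\Zl_n,S)$ take the form $\theta_l=\epsilon(-1)^l+\sum_j\lambda_{ls_j\bmod n}$, where $\lambda_m=2\cos(2\pi m/n)$ is the $m$-th eigenvalue of $C_n$ and $\epsilon\in\{0,1\}$ records whether $n/2\in S$. For $t=2q\pi$ furnished by Lemma \ref{L3}, the term $(-1)^l\epsilon t$ automatically lies in $2\pi\Zl$, while each $\lambda_{ls_j\bmod n}\,t$ is within $\delta/n$ of $2\pi\Zl$ or of $\pi+2\pi\Zl$ according as the $2$-adic valuation $v_2(ls_j\bmod n)$ differs from or equals $k'$. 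Summing over pairs then yields $\theta_l t+l\pi\equiv \pi(N_l+l)\pmod{2\pi}$ up to total error at most $\delta/2$, where $N_l:=\#\{j:v_2(ls_j\bmod n)=k'\}$.

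The main combinatorial task is to show $N_l\equiv l\pmod 2$ for every $l$. Observing that $v_2(ls\bmod n)=v_2(l)+v_2(s)$ whenever this sum is strictly less than $k$, a pair $\{s_j,-s_j\}$ contributes to $N_l$ precisely when $s_j\in S_n(d^*)$ with $d^*=2^{k'-v_2(l)}$, so $N_l=|S\cap S_n(d^*)|/2$ when $v_2(l)\leq k'$ and $N_l=0$ otherwise. For odd $l$ we get $d^*=d$, so $|S\cap S_n(d)|\equiv 2\pmod 4$ makes $N_l$ odd, matching the parity of $l$. For even $l$ with $v_2(l)\leq k'$, the divisor $d^*<d$ satisfies $d^*\leq d/2\leq n/16$, and by the minimality of $d$ the subset $S\cap S_n(d^*)$ is either empty, all of $S_n(d^*)$, or a proper non-empty subset of size $\equiv 0\pmod 4$; in every case $|S\cap S_n(d^*)|/2$ is even because $|S_n(d^*)|/2=2^{k-v_2(d^*)-2}\geq 4$. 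For $v_2(l)>k'$, $N_l=0$. Hence $N_l+l$ is always even and every $\theta_l t+l\pi$ lies within $\delta/2$ of $2\pi\Zl$; inserting this in Equation \ref{E2} gives $|H_S(t)_{0,\frac{n}{2}}|\geq 1-\delta/2$, and letting $\delta\to 0$ produces a sequence in $2\pi\Zl$ along which unitarity forces $H_S(t_m){\bf e}_0\to\gamma{\bf e}_{\frac{n}{2}}$. The main obstacle is the parity analysis above — in particular, treating the case of a proper non-empty $S\cap S_n(d^*)$ of size $\equiv 0\pmod 4$ with $d^*<d$, which is exactly the configuration Theorem \ref{T2} could not resolve.
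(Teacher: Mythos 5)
Your proposal is correct, and while it runs on the same engine as the paper's proof --- Lemma \ref{L3} applied with $2^{k'}=d$, followed by a parity count of which eigenvalue terms pick up a $\pi$-shift --- the organization is genuinely different and in one respect simpler. The paper splits $S$ into the classes $S\cap S_n(d')$ over all divisors and multiplies the corresponding transition matrices via Proposition \ref{IP1}: the class at $d$ delivers PGST, the classes at $d'<d$ are handled exactly as in your parity count, but the classes at $d''>d$ are treated by a separate mechanism (Lemma \ref{L2} applied to the constituent cycles, followed by extraction of a subsequence of $\left\lbrace t_m\right\rbrace$). You instead work directly with the eigenvalue sum $\theta_l=\sum_{s\in S}\omega_n^{ls}$ and observe that for $s$ of $2$-adic valuation exceeding $k'$ the valuation of $ls\bmod n$ can never equal $k'$, so Lemma \ref{L3} already pushes those terms into $2\pi\Zl$ at the very same time $t=2q\pi$ --- no subsequence, no Lemma \ref{L2}, no factorization of $H_S(t)$ needed. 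This buys a single-pass argument in which the whole approximation holds at one $t$ for each $\delta$, at the cost of the more delicate bookkeeping in $N_l\equiv l\pmod 2$, which you carry out correctly: the only nontrivial case is $d^*<d$ with $S\cap S_n(d^*)$ nonempty, where either minimality of $d$ gives $\left|S\cap S_n(d^*)\right|\equiv 0\pmod 4$ or the intersection is all of $S_n(d^*)$ and $\left|S_n(d^*)\right|/2=2^{k-v_2(d^*)-2}\geq 4$ is even because $d^*\leq n/16$. (One small point worth spelling out in a final write-up: in the proper nonempty case the intersection has even size because it is symmetric and cannot contain $n/2$, so minimality indeed rules out everything but $0\pmod 4$.)
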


\begin{proof}
First we shall prove that there exists a sequence in $2\pi\Zl$ with respect to which $Cay\left(\Zl_n,S\cap S_n(d)\right)$ exhibits PGST and $Cay\left(\Zl_n,S\setminus S_n(d)\right)$ is almost periodic. Let us suppose $d=2^{k'}.$ Since $S\cap S_n(d)\neq \emptyset$ and $S\cap S_n(d)\subsetneq S_n(d)$, the set $S_n(d)$ must have atleast $4$ elements. Therefore we must have $\phi\left(\frac{n}{d}\right)\geq 4,$ i.e, $\frac{n}{d}\geq 8$.\\
The exponential function $\exp{\left(-ix\right)}$ is uniform continuous. Therefore, for $\epsilon>0$, there exists $\delta>0$ such that $x_1,x_2\in\Rl$ and $|x_1-x_2|<\delta$ implies $|\exp{\left(-ix_1\right)}-\exp{\left(-ix_2\right)}|<\epsilon.$ Now we proceed using the following steps.\\
\textbf{Step 1:} If $\theta_l$ denotes the eigenvalues of $Cay\left(\Zl_n,S\cap S_n(d)\right)$ then
\[\theta_l=\frac{1}{2}\sum\limits_{s\in S\cap S_n(d)}\lambda_{ls}.\]
Since $\left|S\cap S_n(d)\right|\equiv 2\pmod{4}$, using Lemma \ref{L3} and triangle inequality, we obtain that for $\delta>0$ there exists $t\in 2\pi\Zl$ such that for each $l$ there exists an integer $l'$ such that 
\begin{eqnarray*}
\left|\left(\theta_l t+l\pi\right)- 2l'\pi\right|<\delta
\end{eqnarray*}
Hence there exists $t=2\pi\Zl$ such that $\left|\exp{\left[-i\left(\theta_l t+l\pi\right)\right]}- 1\right|<\epsilon.$ Finally, by Equation \ref{E2}, if $H_{S\cap S_n(d)}(t)$ is the transition matrix of $Cay\left(\Zl_n,S\cap S_n(d)\right)$ then
\begin{eqnarray*}
\left|\left[H_{S\cap S_n(d)}(t)\right]_{0,\frac{n}{2}}-1\right|=\frac{1}{n}\left|\sum\limits_{l=0}^{n-1}\left(\exp{\left[-i\left(\theta_l t+l\pi\right)\right]}-1\right)\right|<\epsilon.
\end{eqnarray*}
This leads to the conclusion that $Cay\left(\Zl_n,S\cap S_n(d)\right)$ admits PGST with respect to a sequence $\left\lbrace t_m\right\rbrace$ (say) in $2\pi\Zl$. Hence
\begin{eqnarray}\label{T3E1}
\lim_{m\rightarrow\infty} H_{S\cap S_n(d)}\left(t_m\right) {\bf e}_0={\bf e}_{\frac{n}{2}}
\end{eqnarray}
\textbf{Step 2:} If $d'$ be a divisor of $n$ such that $d'<d$ and $S\cap S_n(d')\neq \emptyset$ then we have $\left|S\cap S_n(d')\right|\equiv 0\pmod{4}.$ If $\eta_l$ denotes the eigenvalues of $Cay\left(\Zl_n,S\cap S_n(d')\right)$ then
\[\eta_l=\frac{1}{2}\sum\limits_{s\in S\cap S_n(d')}\lambda_{ls}.\]
Since $\left|S\cap S_n(d)\right|\equiv 0\pmod{4}$, using Lemma \ref{L3} and triangle inequality, we can carefully choose the same $t\in 2\pi\Zl$ as in Step 1 so that for each $l$ there exists an integer $l'$ such that 
\begin{eqnarray*}
\left|\eta_l t- 2l'\pi\right|<\delta
\end{eqnarray*}
Hence there exists $t=2\pi\Zl$ such that $\left|\exp{\left[-i\eta_l t\right]}- 1\right|<\epsilon.$ Finally, if $H_{S\cap S_n(d')}(t)$ is the transition matrix of $Cay\left(\Zl_n,S\cap S_n(d')\right)$ then Equation \ref{E2a} implies
\[\left|\left[H_{S\cap S_n(d')}(t)\right]_{0,0}-1\right|=\frac{1}{n}\left|\sum\limits_{l=0}^{n-1}\left(\exp{\left[-i\eta_l t\right]}-1\right)\right|<\epsilon.\]
This leads to the conclusion that $Cay\left(\Zl_n,S\cap S_n(d')\right),$ where $d'<d$, is almost periodic with respect to the same sequence $\left\lbrace t_m\right\rbrace$ as obtained in Step 1.  Therefore we have
\begin{eqnarray}\label{T3E2}
\lim\limits_{m\rightarrow\infty} H_{S\cap S_n(d')}\left(t_m\right)=I.
\end{eqnarray}
\textbf{Step 3:} If $d''$ be a divisor of $n$ such that $d''>d$ and $S\cap S_n(d'')\neq \emptyset$ then for $s\in S\cap S_n(d'')$ the size of each disjoint cycles appearing in $Cay\left(\Zl_n,\left\lbrace s,n-s\right\rbrace\right)$ is strictly less than the size of the cycle $C_{n/d}$. In Step 1, in particular, if we consider $S\cap S_n(d)=\{d,-d\}$ then $Cay\left(\Zl_n,S\cap S_n(d)\right)$, which is disjoint union of $d$ copies of the cycle $C_{n/d}$, admits PGST with respect to $\left\lbrace t_m\right\rbrace$. Hence the cycle $C_{n/d}$, in particular, admits PGST with respect to $\left\lbrace t_m\right\rbrace$. Therefore, by Lemma \ref{L2}, we have a subsequence of $\left\lbrace t_m\right\rbrace$ with respect to which each component of $Cay\left(\Zl_n,\left\lbrace s,n-s\right\rbrace\right),~s\in S\cap S_n(d''),$ is almost periodic. Hence $Cay\left(\Zl_n,\left\lbrace s,n-s\right\rbrace\right)$ is also almost periodic. Since $s\in S\cap S_n(d'')$ is finite, we can appropriately choose a subsequence $\left\lbrace t_{m_r}\right\rbrace$ of $\left\lbrace t_m\right\rbrace$ such that for all $s\in S\cap S_n(d'')$, the graph $Cay\left(\Zl_n,\left\lbrace s,n-s\right\rbrace\right)$ is almost periodic with respect to $\left\lbrace t_{m_r}\right\rbrace.$ Hence, by Proposition \ref{IP1}, the graph $Cay\left(\Zl_n,S\cap S_n(d'')\right)$ is almost periodic with respect to $\left\lbrace t_{m_r}\right\rbrace.$ Therefore, if $H_{S\cap S_n(d'')}(t)$ is the transition matrix of $Cay\left(\Zl_n,S\cap S_n(d'')\right)$ then
\begin{eqnarray}\label{T3E3}
\lim\limits_{m\rightarrow\infty} H_{S\cap S_n(d'')}\left(t_{m_r}\right)=I.
\end{eqnarray}
Notice that $S_n(d_1)\cap S_n(d_2)=\emptyset$ whenever $d_1\neq d_2$. Therefore, by Proposition \ref{IP1} and using Equation \ref{T3E2} and Equation \ref{T3E3}, we obtain
\begin{eqnarray}\label{T3E4}
H_{S}\left(t_{m_r}\right)
&=& \left[\prod\limits_{d'|n,~d'<d} H_{S\cap S_n(d')}\left(t_{m_r}\right)\right]\cdot\left[\prod\limits_{d''|n,~d<d''} H_{S\cap S_n(d'')}\left(t_{m_r}\right)\right]\cdot H_{S\cap S_n(d)}\left(t_{m_r}\right) \nonumber \\
&=& H_{S\cap S_n(d')}\left(t_{m_r}\right).
\end{eqnarray}
Finally, by Equation \ref{T3E1} and Equation \ref{T3E4}, we conclude that $Cay\left(\Zl_n,S\right)$ admits PGST with respect to a sequence in $2\pi\Zl$.
\end{proof}
Recall that, in Example \ref{Ex1}, the graph $Cay\left(\Zl_n,S\right)$ satisfying all the conditions of Theorem \ref{T3} as $S\cap S_n(2)=\{2,4\}$. Hence $Cay\left(\Zl_n,S\right)$ admits PGST with respect to a sequence in $2\pi\Zl$. Using Theorem \ref{T3}, we can find many such graphs exhibiting PGST. However notice that if, for all $d$ with $S\cap S_n(d)\subsetneq S_n(d)$, we have $\left|S\cap S_n(d)\right|\equiv 0\pmod{4}$ then Theorem \ref{T3} does not apply. Consider the following example.
\begin{ex}\label{Ex2}
Let $n=16,~S=\{1,3,4,12,13,15\}$ and consider the circulant graph $G=Cay\left(\Zl_n,S\right)$. Observe that $S\cap S_n(1)=\{1,3,13,15\}\subsetneq S_n(1).$ Obtaining the $\left(0,\frac{n}{2}\right)$-th entry of the transition matrix of $G$, we observe the following Figure \ref{f:p2} in GNU Octave \cite{eat}. The next result suggests that $G$ exhibits PGST.
\begin{figure}[h!]
\begin{subfigure}{0.4\textwidth}
\begin{tikzpicture}[thin, scale=0.7, every node/.style={scale=0.8}]
\grCirculant{16}{1,3,4,12,13,15}
\end{tikzpicture}
\end{subfigure}
\hfill
\begin{subfigure}{0.6\textwidth}
\includegraphics[width=25em]{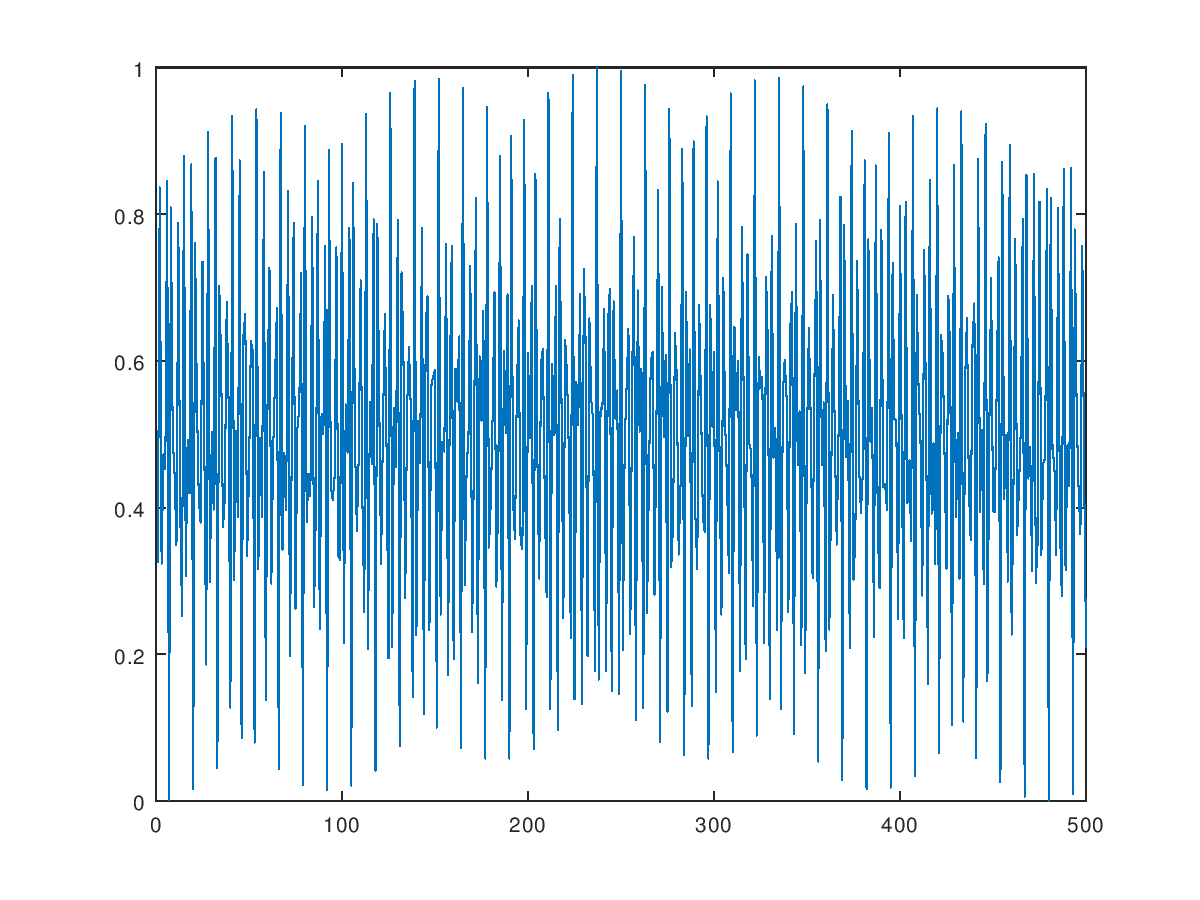}
\end{subfigure}
\caption{\small The $\left(0,\frac{n}{2}\right)$-th entry of transition matrix of $Cay\left(\Zl_n,S\right)$ at $[0,500\pi]\cap \left(2\Zl+1\right)\frac{\pi}{2}$.}
\label{f:p2}
\end{figure}
\end{ex}
In such cases mentioned above, we show that if either $\frac{n}{2}$ or $\frac{n}{4}$ but not both is in $S$ then $Cay\left(\Zl_n,S\right)$ admits PGST. Consider the following theorem.
\begin{theorem}\label{T4}
Let $k\in\Nl$, $n=2^k$ and $Cay\left(\Zl_n,S\right)$ be a circulant graph with $\left|\left\lbrace\frac{n}{2},~\frac{n}{4}\right\rbrace \cap S\right|\equiv 1\pmod{2}$. If each divisor $d~\left(\neq \frac{n}{2},\frac{n}{4}\right)$ of $n$ satisfy $\left|S\cap S_n(d)\right|\equiv 0\pmod{4}$ then $Cay\left(\Zl_n,S\right)$ admits pretty good state transfer with respect to a sequence in $\left(2\Zl+1\right)\frac{\pi}{2}$.
\end{theorem}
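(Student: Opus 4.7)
My plan is to adapt the decomposition used in Theorem~\ref{T3}. First I write $S = S_0 \sqcup S_1$, where $S_0$ is $\{n/2\}$ or $\{n/4, 3n/4\}$ according to which lies in $S$, and $S_1 = S\setminus S_0$. The hypothesis then ensures $|S_1 \cap S_n(d)| \equiv 0 \pmod 4$ for every divisor $d$ of $n$ (the cases $d \in \{n/2,n/4\}$ give empty intersection with $S_1$). By Proposition~\ref{IP1}, $H_S(t) = H_{S_0}(t)\,H_{S_1}(t)$, and a direct block-diagonal computation shows that at every $t \in (2\Zl+1)\pi/2$ the graph $Cay(\Zl_n, S_0)$ performs perfect state transfer between $0$ and $n/2$: when $S_0 = \{n/2\}$ the formula $H_{S_0}(t){\bf e}_0 = \cos(t){\bf e}_0 - i\sin(t){\bf e}_{n/2}$ evaluates to $-i(-1)^r{\bf e}_{n/2}$ at $t=(2r+1)\pi/2$, and when $S_0 = \{n/4,3n/4\}$ the analogous $C_4$-computation gives $H_{S_0}(t){\bf e}_0 = -{\bf e}_{n/2}$. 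Consequently $H_S(t){\bf e}_0 = \gamma_0(t)\,H_{S_1}(t){\bf e}_{n/2}$ with $|\gamma_0(t)| = 1$, and the problem reduces to proving that $Cay(\Zl_n, S_1)$ is almost periodic along some sequence in $(2\Zl+1)\pi/2$.

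To establish this, I examine $\theta_l^{(1)}t/(2\pi) = (2r+1)\theta_l^{(1)}/4$, where $\theta_l^{(1)} = \tfrac{1}{2}\sum_{s \in S_1}\lambda_{ls}$, and split the summation into a \emph{Type~I} part (where $ls \bmod n \in \{0, n/4, n/2, 3n/4\}$ and $\lambda_{ls} \in \{-2,0,2\}$) and a \emph{Type~II} part (where $\lambda_{ls}$ is irrational algebraic). For $s \in S_n(2^{k'})$, the conditions $ls \equiv 0$ and $ls \equiv n/2 \pmod n$ reduce to $v_2(l) \geq k-k'$ and $v_2(l) = k-1-k'$ respectively (where $v_2$ denotes $2$-adic valuation), so the Type~I contribution takes the form $\tfrac{2r+1}{4}(C_l^+ - C_l^-)$ for counts $C_l^\pm$ expressible as a signed integer combination of quantities $|S_1 \cap S_n(d)|$ with $d \ne n/2, n/4$. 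Because each such quantity is $\equiv 0 \pmod 4$ by hypothesis, $C_l^+ - C_l^- \equiv 0 \pmod 4$, and hence the Type~I contribution is an \emph{exact} integer for every $l$ and every integer $r$.

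For the Type~II part I will apply the Kronecker approximation theorem to the set $\{\lambda_j/8 : 1 \leq j \leq n/4-1\}$, which is $\Ql$-linearly independent together with $1$ by the argument in the proof of Lemma~\ref{L3}. Taking all targets $\alpha_j = 0$ yields an integer $q$ with $|q\lambda_j/8 - p_j| < \delta/n$ for suitable $p_j \in \Zl$. A torus-density argument — noting that $\{1, 2\lambda_j/8\}$ is equally $\Ql$-linearly independent, so $\{2q\vec\mu \bmod 1\}$ is dense in the full torus, and therefore so is its $\vec\mu$-translate — lets me further force $q$ to be odd, placing $t = q\pi/2$ in $(2\Zl+1)\pi/2$. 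Extending by the symmetries $\lambda_{n-j} = \lambda_j$ and $\lambda_{n/2-j} = -\lambda_j$ then brings every Type~II summand $q\lambda_{ls}/8$ within $\delta/n$ of $\Zl$.

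Summing over $s \in S_1$ bounds the total error in $(2r+1)\theta_l^{(1)}/4$ by at most $\delta$, and combining with the exact Type~I integrality gives $\theta_l^{(1)}\,t \in 2\pi\Zl + O(\delta)$ uniformly in $l$. Hence $H_{S_1}(t) \to \gamma_1 I$ along this sequence, and the factorisation concludes that $Cay(\Zl_n, S)$ exhibits pretty good state transfer from $0$ to $n/2$ with respect to a sequence in $(2\Zl+1)\pi/2$. The main obstacle will be the third step — carrying out the Kronecker approximation while simultaneously forcing $q$ to be odd and keeping all coordinates controlled — and this is precisely where the mod-$4$ hypothesis earns its keep in the Type~I bookkeeping, since the individual factors $(2r+1)/4$ appearing there are inherently non-integer unless the divisibility condition eliminates them uniformly in $l$.
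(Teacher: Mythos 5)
Your proposal is correct and follows essentially the same route as the paper: split off $S_0\in\left\lbrace\left\lbrace\frac{n}{2}\right\rbrace,\left\lbrace\frac{n}{4},\frac{3n}{4}\right\rbrace\right\rbrace$, which performs perfect state transfer between $0$ and $\frac{n}{2}$ at every odd multiple of $\frac{\pi}{2}$, and use Kronecker's theorem on the $\Ql$-linearly independent eigenvalues of $C_n$ to make the complementary graph $Cay\left(\Zl_n,S\setminus S_0\right)$ almost periodic along such times. The differences are only in implementation: the paper forces $t\in\left(2\Zl+1\right)\frac{\pi}{2}$ by choosing Kronecker targets $\alpha_l=-\frac{\lambda_l}{2}$ rather than by your translate-of-a-dense-orbit argument, and your explicit Type~I/Type~II bookkeeping spells out (more carefully than the paper's single appeal to the triangle inequality) exactly how the mod-$4$ hypothesis turns the rational-eigenvalue contributions into exact integers.
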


\begin{proof}
We have established in Lemma \ref{L3} that the distinct positive eigenvalues of $C_n$ are linearly independent over $\Ql$. For $ 1\leq l\leq 2^{k-2}-1$, let us choose $\alpha_l=-\frac{\lambda_l}{2}$. By Kronecker approximation theorem, for $\delta>0$ there exist $q,m_1,\ldots,m_{2^{k-2}-1}\in\Zl$ such that for $l=1,\ldots, 2^{k-2}-1$
\begin{eqnarray}\label{T4E1}
\left|q\lambda_l-m_l-\alpha_l\right|<\frac{\delta}{n\pi},\; \emph{i.e,}\;\left|\left((2q+1)\frac{\pi}{2}\right)\lambda_l-m_l\pi\right|<\frac{\delta}{n}.
\end{eqnarray} 
Since $\lambda_0,\lambda_{2^{k-2}},\lambda_{2^{k-1}}$ and $\lambda_{3\cdot2^{k-2}}$ are even integers, considering $t=\left(2q+1\right)\frac{\pi}{2}$, we observe that for each $l=0,\;2^{k-2},\;2^{k-1},\;3\cdot2^{k-2}$ there exists an integer $l'$ such that
\[\left|\lambda_l t- l'\pi\right|<\frac{\delta}{n}.\]
Since $\lambda_l=-\lambda_{\frac{n}{2}-l}=-\lambda_{\frac{n}{2}+l}=\lambda_{n-l}$ holds for $ 1\leq l\leq 2^{k-2}-1$, considering $t=\left(2q+1\right)\frac{\pi}{2}$ and using Equation \ref{T4E1}, we conclude that for each $l=0,\ldots, n-1,$ there is an integer $l'$ such that
\begin{eqnarray}\label{T4E2}
\left|\lambda_l t- l'\pi\right|<\frac{\delta}{n}.
\end{eqnarray}
Consider $S'=S\setminus\left\lbrace\frac{n}{4},\frac{3n}{4},\frac{n}{2}\right\rbrace$. If $\theta_l$ denotes the eigenvalues of $Cay\left(\Zl_n,S'\right)$ then
\[\theta_l=\frac{1}{2}\sum\limits_{s\in S'}\lambda_{ls}.\]
Since each divisor $d~\left(\neq \frac{n}{2},\frac{n}{4}\right)$ of $n$ satisfy $\left|S\cap S_n(d)\right|\equiv 0\pmod{4}$, using Equation \ref{T4E2} and triangle inequality, we obtain that for $\delta>0$ there exists $t\in\left(2\Zl+1\right)\frac{\pi}{2}$ so that for each $l$ there exists an integer $l'$ such that 
\begin{eqnarray*}
\left|\theta_l t- 2l'\pi\right|<\delta
\end{eqnarray*}
Since the exponential function is uniform continuous, for $\epsilon>0$ there exists $t\in\left(2\Zl+1\right)\frac{\pi}{2}$ such that $\left|\exp{\left[-i\theta_l t\right]}- 1\right|<\epsilon.$ Finally, if $H_{S'}(t)$ is the transition matrix of $Cay\left(\Zl_n,S'\right)$ then by Equation \ref{E2a}, we obtain
\begin{eqnarray*}
\left|\left[H_{S'
}(t)\right]_{0,0}-1\right|=\frac{1}{n}\left|\sum\limits_{l=0}^{n-1}\left(\exp{\left[-i\theta_l t\right]}-1\right)\right|<\epsilon.
\end{eqnarray*}
This leads to the conclusion that $Cay\left(\Zl_n,S'\right)$ almost periodic with respect to a sequence $\left\lbrace t_m\right\rbrace$ (say) in $\left(2\Zl+1\right)\frac{\pi}{2}$. Hence
\begin{eqnarray}\label{T4E3}
\lim_{m\rightarrow\infty} H_{S'}\left(t_m\right)=I.
\end{eqnarray}
By Proposition \ref{IP1} and using Equation \ref{T4E3}, we find that if $H_{S}(t)$ and $H_{S\setminus S'}(t)$ are the transition matrices of $Cay\left(\Zl_n,S\right)$ and $Cay\left(\Zl_n,S\setminus S'\right)$, respectively, then
\begin{eqnarray}\label{T4E4}
\lim_{m\rightarrow\infty} H_{S}\left(t_m\right)=\lim_{m\rightarrow\infty} H_{S\setminus S'}\left(t_m\right).
\end{eqnarray}
Notice that $Cay\left(\Zl_n,\left\lbrace\frac{n}{2}\right\rbrace\right)$ is the disjoint union of path on two vertices (denoted by $P_2$) and $Cay\left(\Zl_n,\left\lbrace\frac{n}{4},\frac{3n}{4}\right\rbrace\right)$ is the disjoint union of $C_4$. It is well known that both $P_2$ and $C_4$ admits perfect state transfer at $\frac{\pi}{2}$ and are periodic at $\pi$. Hence, by Equation \ref{T4E4}, we conclude that $Cay\left(\Zl_n,S\right)$ exhibits PGST with respect to a sequence in $t\in\left(2\Zl+1\right)\frac{\pi}{2}$.
\end{proof}
In Theorem \ref{T4}, if $Cay\left(\Zl_n,S\right)$ is integral then it is certainly periodic at $2\pi.$ Therefore, by Proposition \ref{IP2}, if $Cay\left(\Zl_n,S\right)$ exhibits PGST then $Cay\left(\Zl_n,S\right)$ also exhibits PST. A complete characterization of circulant graphs exhibiting perfect state transfer is appearing in \cite{mil4}. Notice that Theorem \ref{T4} in fact reveals all integral circulant graphs on $2^k$ vertices exhibiting PST. We present the scenario with an example. 
\begin{ex}\label{Ex3}
Consider $S=\{1,2,3,5,6,7\}$ and $G=Cay\left(\Zl_8,S\right).$ By Theorem \ref{so}, the graph $G$ is integral and hence it is periodic at $2\pi$ (see the spectral decomposition of the transition matrix). Also applying Theorem \ref{T4}, we find that $G$ exhibits PGST. Finally, by Proposition \ref{IP2}, we conclude that $G$ exhibits perfect state transfer.
\end{ex}

Note that if a circulant graph exhibits PGST then it is almost periodic. In Theorem \ref{T2}, we find that if $\left|S\cap S_n(d)\right|\equiv 0\pmod{4}$ then $Cay\left(\Zl_n,S\right)$ is almost periodic. It is therefore tempting to presume that all such graph exhibits PGST, however, that is not the case. We demonstrate this with the following example. 
\begin{ex}\label{Ex3}
Consider $S=\{1,7,9,15\}$ and $G=Cay\left(\Zl_{n},S\right),$ with $n=16$. Let the eigenvalues of $G$ be $\theta_l,$ where $l=0,1,\ldots,n-1$. Note here that $\theta_1=\theta_4=0$. If $G$ admits PGST then we have a sequence of real numbers $\left\lbrace t_k\right\rbrace$ and a complex number $\gamma$ with $|\gamma|=1$ such that (using Equation \ref{E2})
\[\lim_{k\rightarrow\infty}\sum\limits_{l=0}^{n-1}\exp{\left[-i\theta_l t_k+l\pi\right]}=n\gamma.\]
Since the unit circle is compact, we have a subsequence $\left\lbrace t'_k\right\rbrace$ of $\left\lbrace t_k\right\rbrace$ such that
\[\lim_{k\rightarrow\infty}\exp{[-i\theta_l t'_k+l\pi]}=\gamma,\text{ for }0\leq l\leq n-1.\]
\end{ex}
Now $\theta_1=0$ gives $\gamma=-1$ and $\theta_4=0$ gives $\gamma=1$, which is absurd. Hence the graph $G$ does not exhibit PGST.
\section*{Acknowledgements}
We are deeply grateful to Dr. Himadri Nayak, IIIT Bhagalpur, India, for the help with GNU Octave programming. Also we are indebted to the anonymous reviewer(s) for carefully assessing the manuscript.

\end{document}